\newtheorem{theorem}{Theorem}
\theoremstyle{plain}
\newtheorem{definition}{Definition}
\newtheorem{remark}{Remark}
\numberwithin{equation}{section}
\begin{document}
\title[On Jensen Functional, Convexity and Uniform Convexity]{On Jensen
Functional, Convexity and Uniform Convexity}
\author{Shoshana Abramovich}
\address{Department of Mathematics, University of Haifa, Haifa, Israel}
\email{abramos@math.haifa.ac.il }
\date{May 22, 2024}
\subjclass{26D15, 26A51, 39B62, 47A63, 47A64}
\keywords{ Jensen Functionals, Convexity, Uniform Convexity}

\begin{abstract}
In this paper we improve\ results related to Normalized Jensen Functional
for convex functions and Uniformly Convex Functions.
\end{abstract}

\maketitle

\section{\protect\bigskip \textbf{Introduction}}

In this paper we extend and refine Jensen Functional type inequalities that
appeared in \cite{AD}, \cite{D} and \cite{BMP}. \textbf{Jensen functional }%
is:%
\begin{equation*}
J_{n}\left( f,\mathbf{x},\mathbf{p}\right) =\sum_{i=1}^{n}p_{i}f\left(
x_{i}\right) -f\left( \sum_{i=1}^{n}p_{i}x_{i}\right) .
\end{equation*}

We start with some theorems, definitions and notations that appeared in
these papers and are used here.

\begin{theorem}
\label{Th1} \cite{D} \textit{Consider the normalized Jensen functional where 
}$f:C\longrightarrow 
\mathbb{R}
$\textit{\ is a convex function on the convex set }$C$ in a real linear
space,\textit{\ }$\mathbf{x}=\left( x_{1},...,x_{n}\right) \in C^{n},$ and%
\textit{\ \ }$\mathbf{p}=\left( p_{1},...,p_{n}\right) ,$\textit{\ \ }$%
\mathbf{q}=\left( q_{1},...,q_{n}\right) $\textit{\ are non-negative
n-tuples satisfying }$\sum_{i=1}^{n}p_{i}=1,$\textit{\ \ }$%
\sum_{i=1}^{n}q_{i}=1,$\textit{\ \ }$q_{i}>0,$\textit{\ \ }$i=1,...,n$%
\textit{. Then } 
\begin{equation}
MJ_{n}\left( f,\mathbf{x},\mathbf{q}\right) \geq J_{n}\left( f,\mathbf{x},%
\mathbf{p}\right) \geq mJ_{n}\left( f,\mathbf{x},\mathbf{q}\right) ,\qquad
\label{1.1}
\end{equation}%
provided that 
\begin{equation*}
m=\min_{1\leq i\leq n}\left( \frac{p_{i}}{q_{i}}\right) ,\quad M=\max_{1\leq
i\leq n}\left( \frac{p_{i}}{q_{i}}\right) .
\end{equation*}
\end{theorem}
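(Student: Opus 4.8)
The plan is to prove the two bounds in \eqref{1.1} separately, each by writing the weight vector as a genuine convex combination and then applying the convexity of $f$ together with the nonnegativity of the Jensen functional that Jensen's inequality itself supplies. The whole argument rests on a single decomposition idea, used once with $m$ and once with $M$.

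For the lower bound I would begin from the observation that, because $m=\min_i\left(p_i/q_i\right)$, the quantities $s_i:=p_i-mq_i$ are all nonnegative, while $\sum_{i}s_i=1-m$ since both $\mathbf{p}$ and $\mathbf{q}$ are normalized. Assuming $m<1$, I set $r_i:=s_i/(1-m)$, so that $\mathbf{r}=\left(r_1,\dots,r_n\right)$ is again a probability vector and $p_i=mq_i+(1-m)r_i$. The barycenter then splits as
\[
\sum_{i=1}^{n}p_ix_i=m\sum_{i=1}^{n}q_ix_i+(1-m)\sum_{i=1}^{n}r_ix_i,
\]
and convexity of $f$ bounds $f\left(\sum_i p_ix_i\right)$ above by $m f\left(\sum_i q_ix_i\right)+(1-m)f\left(\sum_i r_ix_i\right)$. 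Substituting this into the definition of $J_n$ and regrouping the linear part $\sum_i p_if(x_i)=m\sum_i q_if(x_i)+(1-m)\sum_i r_if(x_i)$ gives the exact identity
\[
J_n\left(f,\mathbf{x},\mathbf{p}\right)\geq m\,J_n\left(f,\mathbf{x},\mathbf{q}\right)+(1-m)\,J_n\left(f,\mathbf{x},\mathbf{r}\right).
\]
Since $\mathbf{r}$ is a probability vector and $f$ is convex, $J_n\left(f,\mathbf{x},\mathbf{r}\right)\geq 0$ by Jensen's inequality, and the lower bound drops out.

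For the upper bound I would run the mirror-image argument: now $M=\max_i\left(p_i/q_i\right)$ makes $t_i:=Mq_i-p_i$ nonnegative with $\sum_i t_i=M-1$, so (assuming $M>1$) the normalized vector $u_i:=t_i/(M-1)$ is a probability vector satisfying $q_i=\tfrac{1}{M}p_i+\tfrac{M-1}{M}u_i$. Applying convexity of $f$ to the resulting combination of barycenters, multiplying through by $M$, and regrouping exactly as before yields
\[
M\,J_n\left(f,\mathbf{x},\mathbf{q}\right)\geq J_n\left(f,\mathbf{x},\mathbf{p}\right)+(M-1)\,J_n\left(f,\mathbf{x},\mathbf{u}\right),
\]
and $J_n\left(f,\mathbf{x},\mathbf{u}\right)\geq 0$ from Jensen's inequality finishes the proof.

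The substantive step, and the one I expect to need the most care, is verifying that the leftover weights $s_i$ and $t_i$ are honestly nonnegative and sum to $1-m$ and $M-1$ respectively, so that after normalization they form legitimate probability vectors to which Jensen's inequality can be applied; this is precisely where the definitions of $m$ and $M$ as the extreme ratios enter. The degenerate cases $m=1$ and $M=1$ have to be handled separately, since the normalizations divide by $1-m$ and $M-1$, but in each case the extremal ratio being attained at $1$ forces $\mathbf{p}=\mathbf{q}$ by normalization and the claimed inequality is immediate.
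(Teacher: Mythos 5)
Your proof is correct. The paper states Theorem \ref{Th1} only as a quoted result from \cite{D} without reproducing its proof, but your decomposition --- the nonnegative residuals $p_i-mq_i$ (respectively $Mq_i-p_i$), with the $\mathbf{q}$-barycenter (respectively $\mathbf{p}$-barycenter) entering as an auxiliary point --- is exactly the mechanism of Dragomir's original argument and of this paper's own proof of its refinement, Theorem \ref{Th7}; normalizing the residuals into probability vectors $\mathbf{r}$, $\mathbf{u}$ and then applying two-point convexity followed by $n$-point Jensen is just a repackaging of the single $(n+1)$-point Jensen application used there (and your intermediate displayed relation, incidentally, is an inequality rather than the ``exact identity'' you call it).
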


In, Section 2 we use Theorem \ref{Th2} and the following notations:

Let $\mathbf{x}_{\uparrow }=\left( x_{\left( 1\right) },...,x_{\left(
n\right) }\right) $ be the \textit{increasing rearrangement} of $\mathbf{x=}%
\left( x_{1},...,x_{n}\right) .$ $\ $Let\ $\pi $ be the permutation that
transfers $\mathbf{x}$ into $\mathbf{x}_{\uparrow }$\ and let $\left( 
\overline{p}_{1},...,\overline{p}_{n}\right) $ and $\left( \overline{q}%
_{1},...,\overline{q}_{n}\right) $ be the $n$-tuples obtained by the same
permutation \ $\pi $\ \ on $\left( p_{1},...,p_{n}\right) $ and $\left(
q_{1},...,q_{n}\right) $ respectively. Then for an $n$-tuple \ $\mathbf{x}%
=\left( x_{1},...,x_{n}\right) ,$\ \ $x_{i}\in I$\ ,\ \ $i=1,...,n$ \ where $%
I$ \ is an interval in $%
\mathbb{R}
$ \ we get the following results:

\begin{theorem}
\label{Th2} \cite[Theorem 4]{AD} Let $\mathbf{p}=\left(
p_{1},...,p_{n}\right) ,$ where $\ 0\leq \sum_{j=1}^{i}\overline{p}_{j}\leq
1,$\ \ $i=1,...,n,$\ \ $\sum_{i=1}^{n}p_{i}=1,$ and\ $\mathbf{q}=\left(
q_{1},...,q_{n}\right) ,$ \ $0<\sum_{j=1}^{i}\overline{q}_{j}<1,$\ \ $%
i=1,...,n-1$,\ \ $\sum_{i=1}^{n}q_{i}=1,$ and $\ \mathbf{p}\neq $\ \ $%
\mathbf{q}.$ \ Denote 
\begin{equation*}
m_{i}:=\frac{\sum_{j=1}^{i}\overline{p}_{j}}{\sum_{j=1}^{i}\overline{q}_{j}}%
,\qquad \overline{m}_{i}=:\frac{\sum_{j=i}^{n}\overline{p}_{j}}{%
\sum_{j=i}^{n}\overline{q}_{j}},\qquad i=1,...,n
\end{equation*}%
where $\left( \overline{p}_{1},...,\overline{p}_{n}\right) $ and $\left( 
\overline{q}_{1},...,\overline{q}_{n}\right) $ are as denoted above, and 
\begin{equation*}
m^{\ast }:=\min_{1\leq i\leq n}\left\{ m_{i},\overline{m_{i}}\right\}
,\qquad M^{\ast }:=\max_{1\leq i\leq n}\left\{ m_{i},\overline{m_{i}}%
\right\} .
\end{equation*}%
If \ $\mathbf{x}=\left( x_{1},...,x_{n}\right) $\ \ is any n-tuple in $%
I^{n}, $\ \ where $I$ is an interval in $%
\mathbb{R}
,$ then 
\begin{equation}
M^{\ast }J_{n}\left( f,\mathbf{x},\mathbf{q}\right) \geq J_{n}\left( f,%
\mathbf{x},\mathbf{p}\right) \geq m^{\ast }J_{n}\left( f,\mathbf{x},\mathbf{q%
}\right) ,  \label{1.2}
\end{equation}%
where\ \ $f:I\longrightarrow 
\mathbb{R}
$\ \ is a convex function on the interval $I$.
\end{theorem}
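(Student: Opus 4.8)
The plan is to reduce the statement to the elementary hinge functions and then exploit the monotonicity forced by sorting $\mathbf{x}$. First I would record that $J_{n}$ is invariant under a simultaneous permutation of the entries of $\mathbf{x},\mathbf{p},\mathbf{q}$, since both $\sum_{i}p_{i}f(x_{i})$ and the barycenter $\sum_{i}p_{i}x_{i}$ are symmetric in the indexed triples. Hence I may assume from the outset that $x_{1}\le \cdots \le x_{n}$ and write $p_{i},q_{i}$ for $\overline{p}_{i},\overline{q}_{i}$. Setting $P_{i}=\sum_{j\le i}p_{j}$, $Q_{i}=\sum_{j\le i}q_{j}$ and $\overline{P}_{i}=\sum_{j\ge i}p_{j}$, $\overline{Q}_{i}=\sum_{j\ge i}q_{j}$, the hypotheses read $m_{i}=P_{i}/Q_{i}$ and $\overline{m}_{i}=\overline{P}_{i}/\overline{Q}_{i}$, all denominators being positive by the assumptions on $\mathbf{q}$, so that $m^{\ast }$ and $M^{\ast }$ are well defined and $m^{\ast }\ge 0$.

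Next I would use that $f\mapsto J_{n}(f,\mathbf{x},\cdot )$ is linear and annihilates affine functions, together with the integral representation of a convex function on $[x_{1},x_{n}]$, namely $f(x)=a+bx+\int \max (x-t,0)\,d\mu (t)$ with $\mu \ge 0$ the measure $d f'_{+}$. This reduces the whole theorem to proving, for every fixed $t$, the pair of inequalities $m^{\ast }J_{n}(f_{t},\mathbf{x},\mathbf{q})\le J_{n}(f_{t},\mathbf{x},\mathbf{p})\le M^{\ast }J_{n}(f_{t},\mathbf{x},\mathbf{q})$ for the single hinge $f_{t}(x)=\max (x-t,0)$, because integrating these against the non-negative measure $\mu$ reproduces \eqref{1.2}.

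For the hinge the key step is a closed form. Taking $t\in \lbrack x_{k},x_{k+1})$ and using $\sum_{i}w_{i}=1$ and the barycenter $\bar{x}_{w}=\sum_{i}w_{i}x_{i}$, I would check that $J_{n}(f_{t},\mathbf{x},\mathbf{w})=\min (A_{w},B_{w})$ with $A_{w}=\sum_{i>k}w_{i}(x_{i}-t)$ and $B_{w}=\sum_{i\le k}w_{i}(t-x_{i})$; indeed $A_{w}-B_{w}=\bar{x}_{w}-t$, so subtracting $\max(\bar{x}_{w}-t,0)$ from $A_{w}$ always leaves the smaller of the two. Abel summation, legitimate precisely because $\mathbf{x}$ is sorted, now rewrites these as non-negative combinations of the prefix and suffix sums,
\[
A_{w}=\overline{W}_{k+1}(x_{k+1}-t)+\sum_{i=k+2}^{n}\overline{W}_{i}(x_{i}-x_{i-1}),\qquad B_{w}=W_{k}(t-x_{k})+\sum_{i=1}^{k-1}W_{i}(x_{i+1}-x_{i}),
\]
all multipliers being $\ge 0$. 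Since $m^{\ast }\le \overline{P}_{i}/\overline{Q}_{i}\le M^{\ast }$ and $m^{\ast }\le P_{i}/Q_{i}\le M^{\ast }$, a term-by-term comparison yields $m^{\ast }A_{q}\le A_{p}\le M^{\ast }A_{q}$ and $m^{\ast }B_{q}\le B_{p}\le M^{\ast }B_{q}$; as $m^{\ast },M^{\ast }\ge 0$, these inequalities pass to the minimum, giving the required per-$t$ bounds, and then integration in $t$ finishes the proof.

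The only genuine obstacle is the barycenter term $f(\sum_{i}p_{i}x_{i})$, which blocks any naive coefficient-by-coefficient comparison of $J_{n}(f,\mathbf{x},\mathbf{p})$ with $J_{n}(f,\mathbf{x},\mathbf{q})$ because the two barycenters differ. The reduction to hinge functions is exactly what dissolves this: for $f_{t}$ the functional collapses to the clean minimum $\min (A_{w},B_{w})$, whose two pieces are separately and cleanly governed by the suffix ratios $\overline{m}_{i}$ and the prefix ratios $m_{i}$. I expect the main care to be in verifying the closed-form identity $J_{n}(f_{t},\mathbf{x},\mathbf{w})=\min (A_{w},B_{w})$ and in checking that the boundary terms produced by the two Abel summations are precisely the sums entering $m_{i}$ and $\overline{m}_{i}$ (the degenerate cases $k=0$ and $k=n$, where one piece is empty, simply give $J_{n}(f_{t},\cdot )=0$ on both sides).
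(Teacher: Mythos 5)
Your argument is correct, but there is nothing in this paper to compare it against: Theorem \ref{Th2} is stated here as a quotation of \cite[Theorem 4]{AD} and is given no proof at all, so the only meaningful comparison is with the proof in that reference. There the argument goes through the Jensen--Steffensen inequality: since $m^{\ast}\leq m_{i}$ and $m^{\ast}\leq\overline{m}_{i}$ for all $i$, the signed weights $u_{i}=\overline{p}_{i}-m^{\ast}\overline{q}_{i}$ have non-negative prefix and suffix sums and total mass $1-m^{\ast}$, which is strictly positive because $m_{n}=\overline{m}_{1}=1$ forces $m^{\ast}\leq 1$ with equality only when $\mathbf{p}=\mathbf{q}$; applying Jensen--Steffensen to $u/(1-m^{\ast})$ and then one two-point convexity step at the barycenters yields the right-hand inequality of (\ref{1.2}), and the left-hand one is symmetric (with $M^{\ast}\mathbf{q}-\mathbf{p}$). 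Your route is genuinely different and in fact more self-contained: by splitting $f$ into an affine part (annihilated by $J_{n}$) plus a non-negative integral of hinges $f_{t}(x)=\max(x-t,0)$, computing $J_{n}(f_{t},\mathbf{x},\mathbf{w})=\min(A_{w},B_{w})$ exactly, and comparing prefix/suffix sums termwise after Abel summation, you never invoke Jensen--Steffensen --- indeed you reprove it, since $\min(A_{p},B_{p})\geq 0$ integrated over $t$ gives $J_{n}(f,\mathbf{x},\mathbf{p})\geq 0$ --- and your computation makes transparent why precisely the prefix ratios $m_{i}$ and suffix ratios $\overline{m}_{i}$ govern the bounds. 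Two small points you should make explicit to be complete: first, $\overline{x}_{p},\overline{x}_{q}\in[x_{1},x_{n}]$ (this follows from the same Abel summation and is needed both to evaluate the hinge representation at the barycenters and in your degenerate cases $k=0,n$); second, the representation $f(x)=a+bx+\int(x-t)_{+}\,d\mu(t)$ on the compact interval $[x_{1},x_{n}]$ requires $f$ to be continuous there and, if a one-sided derivative is infinite at an endpoint, should be based at an interior point using hinges of both orientations $(x-t)_{+}$ and $(t-x)_{+}$; neither repair disturbs your argument, because the two orientations differ by an affine function and hence give identical values of $J_{n}$.
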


\begin{remark}
\label{Rem1} \cite[Remark 3]{AD} If \ $\underset{1\leq i\leq n}{min}\left( 
\frac{p_{i}}{q_{i}}\right) =\frac{\overline{p}_{k}}{\overline{q}_{k}},$\ $%
k\neq 1,n$\ or\ $\underset{1\leq i\leq n}{\max }\left( \frac{p_{i}}{q_{i}}%
\right) =\frac{\overline{p}_{s}}{\overline{q}_{s}},$ $\ s\neq 1,n$ \ then it
is clear that for $p_{i}\geq 0,$ \ and \ $q_{i}>0,$ we get that$\ \ m^{\ast
}>m$ \ and $M^{\ast }<M$ \ and in these cases (\ref{1.2}) refines (\ref{1.1}%
).
\end{remark}

In Section 3 we deal with refinements of Theorem \ref{Th1} through Uniform
Convexity.

\begin{definition}
\label{Def1} \cite{A} Let $I=\left[ a,b\right] \subset 
\mathbb{R}
$ be an interval and $\Phi :\left[ 0,b-a\right] \rightarrow 
\mathbb{R}
$ be a function. A function $f:\left[ a,b\right] \rightarrow 
\mathbb{R}
$ is said\ to be \textbf{generalized }$\Phi $\textbf{-uniformly convex}\ if:%
\begin{eqnarray*}
tf\left( x\right) +\left( 1-t\right) f\left( y\right) &\geq &f\left(
tx+\left( 1-t\right) y\right) +t\left( 1-t\right) \Phi \left( \left\vert
x-y\right\vert \right) \\
\text{for \ }x,y &\in &I\text{ \ and }t\in \left[ 0,1\right] \text{.}
\end{eqnarray*}%
If in addition $\Phi \geq 0$, then $f$\ is said to be $\Phi $\textbf{%
-uniformly convex}\textit{.}
\end{definition}

\begin{remark}
\label{Rem2} It is proved in \cite{Z} that when $f$ is $\Phi $-uniformly
convex,\textbf{\ }there is always a modulus $\Phi $ which is increasing and $%
\Phi \left( 0\right) =0$. We use this type of $\Phi $\ from Theorem \ref{Th8}
on.
\end{remark}

In Theorem \ref{Th3}, the moduli $\Phi \ $are used as in Remark \ref{Rem2}:

\begin{theorem}
\label{Th3} \cite[Theorem 2.3]{SY} Let $f:\left[ a,b\right] \rightarrow 
\mathbb{R}
$ be an uniformly convex function with modulus $\Phi :\left[ 0,b-a\right]
\rightarrow 
\mathbb{R}
_{+},$ $\left\{ x_{k}\right\} _{k=1}^{n}\subseteq \left[ a,b\right] $ be a
sequence and let $\pi $ be a permutation on $\left\{ 1,...,n\right\} $ such
that $x_{\pi \left( 1\right) }\leq x_{\pi \left( 2\right) }\leq ...\leq
x_{\pi \left( n\right) }$. Then the inequality%
\begin{equation}
\sum_{k=1}^{n}p_{k}f\left( x_{k}\right) -f\left(
\sum_{k=1}^{n}p_{k}x_{k}\right) \geq \sum_{k=1}^{n-1}p_{\pi \left( k\right)
}p_{\pi \left( k+1\right) }\Phi \left( x_{\pi \left( k+1\right) }-x_{\pi
\left( k\right) }\right) ,  \label{1.3}
\end{equation}%
holds for every convex combination $\sum_{k=0}^{n}p_{k}x_{k}$ of points $%
x_{k}\in \left[ a,b\right] $.
\end{theorem}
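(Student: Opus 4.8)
The plan is to reduce to the sorted case and then induct on $n$. Since the Jensen functional $J_{n}\left( f,\mathbf{x},\mathbf{p}\right) $ and the right-hand side of (\ref{1.3}) are both invariant under applying the permutation $\pi $ simultaneously to the points and to the weights, I may assume without loss of generality that $\pi $ is the identity, i.e. $x_{1}\leq x_{2}\leq \cdots \leq x_{n}$; this is exactly the order in which the right-hand sum pairs consecutive points. Throughout I use, as sanctioned by Remark \ref{Rem2}, that $\Phi $ is increasing with $\Phi \left( 0\right) =0$. The base case $n=2$ is precisely Definition \ref{Def1}: taking $t=p_{1}$ and $1-t=p_{2}$ gives $p_{1}f\left( x_{1}\right) +p_{2}f\left( x_{2}\right) \geq f\left( p_{1}x_{1}+p_{2}x_{2}\right) +p_{1}p_{2}\Phi \left( \left\vert x_{2}-x_{1}\right\vert \right) $, which is the asserted inequality for $n=2$.

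For the inductive step I group the first $n-1$ points into their barycenter. Set $s=\sum_{k=1}^{n-1}p_{k}$ (if $s=0$ both sides vanish, so I may assume $s>0$) and let $\bar{x}=s^{-1}\sum_{k=1}^{n-1}p_{k}x_{k}$, so that $\sum_{k=1}^{n}p_{k}x_{k}=s\bar{x}+p_{n}x_{n}$. Applying the defining inequality of uniform convexity to the two points $\bar{x},x_{n}$ with weights $s,p_{n}$ yields $sf\left( \bar{x}\right) +p_{n}f\left( x_{n}\right) \geq f\left( \sum_{k=1}^{n}p_{k}x_{k}\right) +sp_{n}\Phi \left( x_{n}-\bar{x}\right) $. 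Applying the induction hypothesis to $x_{1},\ldots ,x_{n-1}$ with the normalized weights $p_{k}/s$ and then multiplying through by $s$ gives $\sum_{k=1}^{n-1}p_{k}f\left( x_{k}\right) \geq sf\left( \bar{x}\right) +s^{-1}\sum_{k=1}^{n-2}p_{k}p_{k+1}\Phi \left( x_{k+1}-x_{k}\right) $. Adding $p_{n}f\left( x_{n}\right) $ to the latter and inserting the former, the $f\left( \bar{x}\right) $ terms cancel and I obtain
\begin{equation*}
J_{n}\left( f,\mathbf{x},\mathbf{p}\right) \geq sp_{n}\Phi \left( x_{n}-\bar{x}\right) +s^{-1}\sum_{k=1}^{n-2}p_{k}p_{k+1}\Phi \left( x_{k+1}-x_{k}\right) .
\end{equation*}

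It remains to show this lower bound dominates $\sum_{k=1}^{n-1}p_{k}p_{k+1}\Phi \left( x_{k+1}-x_{k}\right) $, which is where the real work lies. Since $s\leq 1$ we have $s^{-1}\geq 1$, so (using $\Phi \geq 0$) the tail sum already covers $\sum_{k=1}^{n-2}p_{k}p_{k+1}\Phi \left( x_{k+1}-x_{k}\right) $; what must be checked is that the single term $sp_{n}\Phi \left( x_{n}-\bar{x}\right) $ dominates the one remaining term $p_{n-1}p_{n}\Phi \left( x_{n}-x_{n-1}\right) $. This is the crux of the argument. Because $\bar{x}$ is a convex combination of $x_{1},\ldots ,x_{n-1}$, all of which are at most $x_{n-1}$, we have $\bar{x}\leq x_{n-1}$, hence $x_{n}-\bar{x}\geq x_{n}-x_{n-1}\geq 0$, and monotonicity of $\Phi $ gives $\Phi \left( x_{n}-\bar{x}\right) \geq \Phi \left( x_{n}-x_{n-1}\right) $; combined with $s=\sum_{k=1}^{n-1}p_{k}\geq p_{n-1}$ this yields $sp_{n}\Phi \left( x_{n}-\bar{x}\right) \geq p_{n-1}p_{n}\Phi \left( x_{n}-x_{n-1}\right) $, completing the induction. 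The main obstacle is precisely isolating these two monotonicity facts, $\bar{x}\leq x_{n-1}$ and $s\geq p_{n-1}$, that make the peeled-off term strong enough; note that without the increasing-modulus hypothesis of Remark \ref{Rem2} the final comparison would break down.
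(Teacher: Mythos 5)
Your proof is correct, but there is nothing in this paper to compare it against: Theorem \ref{Th3} is quoted from \cite[Theorem 2.3]{SY} and the paper reproduces it without proof, using it only as an ingredient (in Theorem \ref{Th8}). Judged on its own, your induction is a valid, self-contained derivation. The reduction to the sorted case is legitimate since both sides of (\ref{1.3}) are invariant under simultaneous relabelling of points and weights; the base case $n=2$ is indeed just Definition \ref{Def1}; and the inductive step, peeling off the largest point $x_{n}$ against the barycenter $\bar{x}$ of the rest, correctly yields the intermediate bound $J_{n}\left( f,\mathbf{x},\mathbf{p}\right) \geq sp_{n}\Phi \left( x_{n}-\bar{x}\right) +s^{-1}\sum_{k=1}^{n-2}p_{k}p_{k+1}\Phi \left( x_{k+1}-x_{k}\right)$, which is in fact slightly stronger than (\ref{1.3}) because $s^{-1}\geq 1$. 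The two facts you isolate as the crux, $\bar{x}\leq x_{n-1}$ (so $\Phi \left( x_{n}-\bar{x}\right) \geq \Phi \left( x_{n}-x_{n-1}\right)$ by monotonicity) and $s\geq p_{n-1}$, are exactly what is needed, and your appeal to an increasing modulus with $\Phi \left( 0\right) =0$ is sanctioned here: the paper states explicitly, just before the theorem, that in Theorem \ref{Th3} the modulus is taken as in Remark \ref{Rem2}. Two minor points worth recording: the degenerate case $s=0$ you dispatch correctly (all mass on $x_{n}$, both sides vanish), and your argument, unlike a proof via the gradient inequality (\ref{3.1}) of Remark \ref{Rem3}, needs no differentiability of $f$, which keeps it at the level of generality of the stated theorem.
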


For $f=\Phi =x^{2},$ $x\geq 0$, we get for $n=2$,\ that there is an equality
in (\ref{1.3}) and for $n\geq 3,$\ $f=\Phi =x^{2},$ $x\geq 0$ (\ref{1.3}) is
mostly a strict inequality

\section{\textbf{Improved Jensen functional through Theorem \protect\ref{Th2}%
}}

In this section we assume that $x_{i}\leq x_{i+1}$, $i=1,...,n-1$.\ Theorems %
\ref{Th4}, \ref{Th5} and \ref{Th6} are easily proved by Theorem \ref{Th2}.
Theorems \ref{Th4}, and \ref{Th6} refine results in \cite{SA}.

\begin{theorem}
\label{Th4} Let $f$ be a convex function on $I$ and $\mathbf{x\in }\left[ a,b%
\right] ^{n}\subset I^{n}$. Let $\mathbf{p}=\left( p_{1},...,p_{n}\right) ,$
where $\ 0\leq \sum_{j=1}^{i}p_{j}\leq 1,$\ \ $i=1,...,n,$\ \ $%
\sum_{i=1}^{n}p_{i}=1,$then,%
\begin{equation}
0\leq \sum_{i=1}^{n}p_{i}f\left( x_{i}\right) -f\left(
\sum_{i=1}^{n}p_{i}x_{i}\right) \leq M^{\ast }\left( \frac{f\left( a\right)
+f\left( b\right) }{2}-f\left( \frac{a+b}{2}\right) \right) .  \label{2.1}
\end{equation}%
If in addition $p_{i}\geq 0$, $i=1,...,n$ then\ 
\begin{eqnarray}
0 &\leq &\sum_{i=1}^{n}p_{i}f\left( x_{i}\right) -f\left(
\sum_{i=1}^{n}p_{i}x_{i}\right) \leq M^{\ast }\left( \frac{f\left( a\right)
+f\left( b\right) }{2}-f\left( \frac{a+b}{2}\right) \right)  \label{2.2} \\
&<&2\left( \frac{f\left( a\right) +f\left( b\right) }{2}-f\left( \frac{a+b}{2%
}\right) \right) ,  \notag
\end{eqnarray}%
where $M^{\ast }$ is as defined in Theorem \ref{Th2} when $q_{0}=q_{n+1}=%
\frac{1}{2}$, $p_{0}=p_{n+1}=0$, $q_{i}=0$,$\ i=1,...,n$.
\end{theorem}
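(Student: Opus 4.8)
The plan is to recognize $J_n(f,\mathbf{x},\mathbf{p})$ as a Jensen functional on an enlarged configuration of $n+2$ points and then to invoke Theorem \ref{Th2} verbatim. Following the prescription in the statement, adjoin the endpoints by setting $x_0=a$ and $x_{n+1}=b$ and work with the $(n+2)$-tuples $\mathbf{x}'=(a,x_1,\dots,x_n,b)$, $\mathbf{p}'=(0,p_1,\dots,p_n,0)$ and $\mathbf{q}'=(\tfrac12,0,\dots,0,\tfrac12)$. Because $a\le x_1\le\cdots\le x_n\le b$, the tuple $\mathbf{x}'$ is already nondecreasing, so the ordering permutation $\pi$ is the identity and the barred tuples coincide with $\mathbf{p}',\mathbf{q}'$; this is exactly what makes the quantities $m_i,\overline{m}_i$, and hence $M^{\ast}$, the ones named in the theorem.

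First I would verify that $\mathbf{p}',\mathbf{q}'$ satisfy the hypotheses of the $(n+2)$-point version of Theorem \ref{Th2}. Each left partial sum of $\mathbf{q}'$ equals $\tfrac12$ before the final index, so it lies in $(0,1)$, while the left partial sums of $\mathbf{p}'$ are exactly the sums $\sum_{j=1}^{i}p_j$, which lie in $[0,1]$ by assumption; both tuples sum to $1$. A direct computation, using that the adjoined points carry no $\mathbf{p}'$-mass and that $\tfrac12 a+\tfrac12 b=\tfrac{a+b}{2}$, gives
\begin{equation*}
J_{n+2}(f,\mathbf{x}',\mathbf{q}')=\frac{f(a)+f(b)}{2}-f\!\left(\frac{a+b}{2}\right),\qquad J_{n+2}(f,\mathbf{x}',\mathbf{p}')=J_n(f,\mathbf{x},\mathbf{p}).
\end{equation*}
With these two identities, Theorem \ref{Th2} immediately delivers
\begin{equation*}
m^{\ast}\!\left(\frac{f(a)+f(b)}{2}-f\!\left(\frac{a+b}{2}\right)\right)\le J_n(f,\mathbf{x},\mathbf{p})\le M^{\ast}\!\left(\frac{f(a)+f(b)}{2}-f\!\left(\frac{a+b}{2}\right)\right),
\end{equation*}
whose upper half is exactly the right-hand inequality of \eqref{2.1}. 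For the lower bound $0\le J_n(f,\mathbf{x},\mathbf{p})$ I would argue that both factors on the left are nonnegative: convexity gives the midpoint gap $\tfrac{f(a)+f(b)}{2}-f(\tfrac{a+b}{2})\ge0$, and each $m_i,\overline{m}_i$ is a ratio of a nonnegative $\mathbf{p}'$-partial-sum to a strictly positive $\mathbf{q}'$-partial-sum, so $m^{\ast}\ge0$.

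To obtain the refinement \eqref{2.2} under $p_i\ge0$, I would estimate $M^{\ast}$ from above: every left partial sum of $\mathbf{p}'$ is at most $1$ and each corresponding $\mathbf{q}'$-partial-sum equals $\tfrac12$, so $m_i\le2$, and symmetrically $\overline{m}_i\le2$, whence $M^{\ast}\le2$. The delicate point, which I expect to be the main obstacle, is the claimed strictness $M^{\ast}(\cdots)<2(\cdots)$. Since $\mathbf{p}'$ is forced to vanish at the last index while $\sum_i p_i=1$, the partial sum $\sum_{j=0}^{n}p'_j$ equals $1$ over a $\mathbf{q}'$-denominator $\tfrac12$, so $M^{\ast}$ in fact attains the value $2$ and the estimate $M^{\ast}\le2$ is generally an equality. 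The strict inequality therefore cannot come from $M^{\ast}<2$; it must instead be extracted from the upper step of Theorem \ref{Th2} itself being strict, equivalently from strict Jensen behavior at the adjoined endpoints, so that $J_n(f,\mathbf{x},\mathbf{p})<2(\cdots)$ holds even though $M^{\ast}(\cdots)=2(\cdots)$. I would close the argument by isolating precisely this non-degeneracy hypothesis through the equality case of Theorem \ref{Th2}, since that is where the genuine work lies.
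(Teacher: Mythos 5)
Your treatment of \eqref{2.1} is correct and is precisely the paper's own argument: the same augmentation to $n+2$ points ($x_{0}=a$, $x_{n+1}=b$, $p_{0}=p_{n+1}=0$, $q_{0}=q_{n+1}=\frac{1}{2}$, $q_{i}=0$ for $i=1,\dots,n$), an appeal to Theorem \ref{Th2}, and the two identities $J_{n+2}(f,\mathbf{x}',\mathbf{q}')=\frac{f(a)+f(b)}{2}-f\left(\frac{a+b}{2}\right)$ and $J_{n+2}(f,\mathbf{x}',\mathbf{p}')=J_{n}(f,\mathbf{x},\mathbf{p})$.

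On \eqref{2.2}, your computation is right, and what you have uncovered is not a gap in your argument but a defect in the paper's. Indeed the left partial sums through index $n$ (in the labelling $0,\dots,n+1$) give the ratio $1/\frac{1}{2}=2$, and likewise the tail ratio from index $1$ equals $2$, so $M^{\ast}=2$ identically, exactly as you say. The paper's proof of the strict inequality consists of the assertion that ``$M^{\ast}<2$ according to Remark \ref{Rem1}'', but that remark presupposes $q_{i}>0$ for every $i$, which fails here since the middle $q_{i}$ vanish; worse, its conclusion is $M^{\ast}<M=\max_{i}(p_{i}/q_{i})$, and here $M=+\infty$, so even granting it one learns nothing like $M^{\ast}<2$. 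In fact the strict inequality in \eqref{2.2} is false as stated: if $f$ is affine on $[a,b]$, both the middle and the right expressions vanish, so no equality-case analysis of Theorem \ref{Th2} (your proposed fallback) can rescue it without an added hypothesis. What is true is that for $f$ not affine on $[a,b]$ (equivalently, when $\frac{f(a)+f(b)}{2}-f\left(\frac{a+b}{2}\right)>0$) one has the strict bound by a direct argument bypassing Theorem \ref{Th2}: let $\ell$ be the affine interpolant of $f$ at $a,b$ and $h=f-\ell\le 0$; then $J_{n}(f,\mathbf{x},\mathbf{p})=J_{n}(h,\mathbf{x},\mathbf{p})\le -h\left(\sum_{i}p_{i}x_{i}\right)\le \max_{[a,b]}(-h)$ (using $p_{i}\ge 0$), and since any maximizer $c$ of $-h$ lies in the open interval $(a,b)$, convexity applied to the midpoint written as a combination of $c$ and the farther endpoint gives $\max_{[a,b]}(-h)<2\left(-h\left(\frac{a+b}{2}\right)\right)$. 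So: your proof of \eqref{2.1} stands and coincides with the paper's; the strict part of \eqref{2.2} requires a non-degeneracy hypothesis on $f$ and a different argument, and the paper's own justification of it is invalid.
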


\begin{proof}
As we emphasized in the beginning of this section, we assume without loss of
generality that $x_{i}\leq x_{i+1}$, $i=1,...,n-1$.\ Choosing $%
p_{0}=p_{n+1}=0$ and $q_{0}=\frac{1}{2}=q_{n+1}$, $q_{i}=0,$ $i=1,...,n$,
and\ also $x_{0}=a$, $\ x_{n+1}=b$, then according to Theorem \ref{Th2} for $%
n+2$ terms we get that (\ref{2.1}) holds.

If in addition $p_{i}\geq 0$, $i=1,...,n$, then $M^{\ast }<2$ according to
Remark \ref{Rem1}, therefore (\ref{2.2}) holds.
\end{proof}

Theorem \ref{Th4} refines \cite[Theorem 2.2]{SA} where it is proved that $%
0\leq \sum_{i=1}^{n}p_{i}f\left( x_{i}\right) -f\left(
\sum_{i=1}^{n}p_{i}x_{i}\right) \leq 2\left( \frac{f\left( a\right) +f\left(
b\right) }{2}-f\left( \frac{a+b}{2}\right) \right) $.

\bigskip

In Section 3, we refine (\ref{2.3}) in Theorem \ref{Th5} for $\Phi $%
-uniformly convex functions $f$.

\begin{theorem}
\label{Th5} \cite[Theorem 2.3]{SA} Let $f$ be a convex function on $I=\left[
a,b\right] $ and $p,q>0,$ $p+q=1$. Then, 
\begin{eqnarray}
&&\min \left\{ p,q\right\} \left[ f\left( a\right) +f\left( b\right)
-2f\left( \frac{a+b}{2}\right) \right]  \label{2.3} \\
&\leq &pf\left( a\right) +qf\left( b\right) -f\left( pa+qb\right)  \notag \\
&\leq &\max \left\{ p,q\right\} \left[ f\left( a\right) +f\left( b\right)
-2f\left( \frac{a+b}{2}\right) \right]  \notag
\end{eqnarray}
\end{theorem}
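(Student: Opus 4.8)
The plan is to recognize this as the two-point case ($n=2$) of Theorem \ref{Th1}, so that the inequality reduces to a direct substitution rather than a fresh convexity argument. Observe that the central quantity $pf(a)+qf(b)-f(pa+qb)$ is exactly $J_{2}(f,\mathbf{x},\mathbf{p})$ for the data $\mathbf{x}=(a,b)$ and $\mathbf{p}=(p,q)$, while the bracketed factor $f(a)+f(b)-2f\left(\frac{a+b}{2}\right)$ is twice the Jensen functional $J_{2}(f,\mathbf{x},\mathbf{q})$ evaluated at the uniform weights $\mathbf{q}=\left(\frac{1}{2},\frac{1}{2}\right)$.

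First I would verify that the hypotheses of Theorem \ref{Th1} hold for this choice: both $\mathbf{p}$ and $\mathbf{q}$ are non-negative $2$-tuples summing to $1$, and $q_{1}=q_{2}=\frac{1}{2}>0$, as required. Next I would compute the ratios $p_{i}/q_{i}$: since $p_{1}/q_{1}=2p$ and $p_{2}/q_{2}=2q$, the extremal constants of Theorem \ref{Th1} are $m=2\min\{p,q\}$ and $M=2\max\{p,q\}$.

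Then I would substitute into (\ref{1.1}). Because $J_{2}(f,\mathbf{x},\mathbf{q})=\frac{1}{2}\left[f(a)+f(b)-2f\left(\frac{a+b}{2}\right)\right]$, the lower bound $mJ_{2}(f,\mathbf{x},\mathbf{q})$ equals $2\min\{p,q\}\cdot\frac{1}{2}\left[f(a)+f(b)-2f\left(\frac{a+b}{2}\right)\right]=\min\{p,q\}\left[f(a)+f(b)-2f\left(\frac{a+b}{2}\right)\right]$, the factor of $2$ from $m$ cancelling against the factor of $\frac{1}{2}$ in $J_{2}(f,\mathbf{x},\mathbf{q})$; the upper bound is handled identically with $M$ in place of $m$. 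This yields precisely (\ref{2.3}).

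I do not anticipate a genuine obstacle here, since everything reduces to bookkeeping once the identification is made; the only point meriting care is tracking the factor of $2$ so that $2\min\{p,q\}\cdot J_{2}(f,\mathbf{x},\mathbf{q})$ collapses cleanly to $\min\{p,q\}$ times the bracketed difference. A self-contained alternative would be to prove the two inequalities directly from convexity by expressing $pa+qb$ as an explicit convex combination involving $a$, $b$ and $\frac{a+b}{2}$, but this route is more computational, so the reduction to Theorem \ref{Th1} is preferable.
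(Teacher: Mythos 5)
Your proposal is correct and takes exactly the paper's route: the paper's entire proof is the single sentence that (\ref{2.3}) is a particular case of Theorem \ref{Th1}, which is precisely your reduction with $\mathbf{x}=(a,b)$, $\mathbf{p}=(p,q)$, $\mathbf{q}=\left(\frac{1}{2},\frac{1}{2}\right)$. You simply make explicit the bookkeeping ($m=2\min\{p,q\}$, $M=2\max\{p,q\}$, and the cancellation of the factor $2$) that the paper leaves to the reader.
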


\begin{proof}
Inequality (\ref{2.3}) is a particular case of Theorem \ref{Th1}.
\end{proof}

\begin{theorem}
\label{Th6} Let $f$ be a convex function on $I$ and $\mathbf{x\in }\left[ a,b%
\right] ^{n}\subset I^{n}$. Let $\mathbf{p}=\left( p_{1},...,p_{n}\right) ,$
where $\ 0\leq \sum_{j=1}^{i}p_{j}\leq 1,$\ \ $i=1,...,n,$\ \ $%
\sum_{i=1}^{n}p_{i}=1$, and let $m^{\ast }$\ and $M^{\ast }$\ as in Theorem %
\ref{Th2} when $q_{i}=\frac{1}{n},$ $i=1,...,n$.\ Then, 
\begin{eqnarray}
&&m^{\ast }\left[ \sum_{i=1}^{n}\frac{f\left( x_{i}\right) }{n}-f\left( 
\frac{\sum_{i=1}^{n}x_{i}}{n}\right) \right]  \label{2.4} \\
&\leq &J_{n}\left( f,\mathbf{x,p}\right)  \notag \\
&\leq &M^{\ast }\left[ \sum_{i=1}^{n}\frac{f\left( x_{i}\right) }{n}-f\left( 
\frac{\sum_{i=1}^{n}x_{i}}{n}\right) \right] .  \notag
\end{eqnarray}%
If $p_{i}\geq 0$, $i=1,...,n$ then 
\begin{eqnarray}
&&\underset{1\leq i\leq n}{\min }\left\{ p_{i}\right\} \left[
\sum_{i=1}^{n}f\left( x_{i}\right) -nf\left( \frac{\sum_{i=1}^{n}x_{i}}{n}%
\right) \right]  \label{2.5} \\
&\leq &m^{\ast }\left[ \sum_{i=1}^{n}\frac{f\left( x_{i}\right) }{n}-f\left( 
\frac{\sum_{i=1}^{n}x_{i}}{n}\right) \right]  \notag \\
&\leq &J_{n}\left( f,\mathbf{x,p}\right)  \notag \\
&\leq &M^{\ast }\left[ \sum_{i=1}^{n}\frac{f\left( x_{i}\right) }{n}-f\left( 
\frac{\sum_{i=1}^{n}x_{i}}{n}\right) \right]  \notag \\
&\leq &\underset{1\leq i\leq n}{\max }\left\{ p_{i}\right\} \left[
\sum_{i=1}^{n}f\left( x_{i}\right) -nf\left( \frac{\sum_{i=1}^{n}x_{i}}{n}%
\right) \right] .  \notag
\end{eqnarray}
\end{theorem}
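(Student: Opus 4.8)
The plan is to read off \eqref{2.4} directly as the special case $q_i=\frac1n$ of Theorem~\ref{Th2}, and then to derive the outer bounds in \eqref{2.5} by comparing the refined constants $m^{\ast},M^{\ast}$ with the coarser constants $m,M$ of Theorem~\ref{Th1}.

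First I would set $q_i=\frac1n$ for all $i$, so that
\begin{equation*}
J_{n}\left( f,\mathbf{x},\mathbf{q}\right)=\sum_{i=1}^{n}\frac{f\left(x_{i}\right)}{n}-f\left(\frac{\sum_{i=1}^{n}x_{i}}{n}\right)=:A ,
\end{equation*}
and note $A\geq 0$ by convexity of $f$. Since we assume $x_{i}\leq x_{i+1}$, the permutation $\pi$ sorting $\mathbf{x}$ is the identity, so $\overline{p}_{j}=p_{j}$ and $\overline{q}_{j}=q_{j}=\frac1n$. The hypotheses of Theorem~\ref{Th2} then hold: $0\leq\sum_{j=1}^{i}p_{j}\leq 1$ is assumed, while $\sum_{j=1}^{i}\overline{q}_{j}=i/n\in(0,1)$ for $i=1,\dots,n-1$. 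Applying Theorem~\ref{Th2} (the case $\mathbf{p}=\mathbf{q}$ being trivial, as both sides then equal $A$) yields the two middle inequalities $m^{\ast}A\leq J_{n}(f,\mathbf{x},\mathbf{p})\leq M^{\ast}A$, which is exactly \eqref{2.4}.

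For \eqref{2.5} I would compare with Theorem~\ref{Th1}. With $q_{i}=\frac1n$ one has $m=\min_{i}(p_{i}/q_{i})=n\min_{i}p_{i}$ and $M=\max_{i}(p_{i}/q_{i})=n\max_{i}p_{i}$. The key point is the sandwiching $m\leq m^{\ast}\leq M^{\ast}\leq M$: each $m_{i}=\frac{\sum_{j=1}^{i}\overline{p}_{j}}{\sum_{j=1}^{i}\overline{q}_{j}}$ and each $\overline{m}_{i}$ is a mediant of the ratios $\overline{p}_{j}/\overline{q}_{j}$ (here simply an average of the $np_{j}$, since the denominators are all $\frac1n$), hence lies between $\min_{j}\overline{p}_{j}/\overline{q}_{j}=m$ and $\max_{j}\overline{p}_{j}/\overline{q}_{j}=M$; taking the min and max over $i$ gives $m^{\ast}\geq m$ and $M^{\ast}\leq M$, exactly the monotonicity behind Remark~\ref{Rem1}. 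Multiplying these comparisons by $A\geq 0$ gives $n(\min_{i}p_{i})A\leq m^{\ast}A$ and $M^{\ast}A\leq n(\max_{i}p_{i})A$, and writing $nA=\sum_{i=1}^{n}f(x_{i})-nf\left(\frac{\sum_{i=1}^{n}x_{i}}{n}\right)$ turns these into the outer inequalities of \eqref{2.5}.

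The only content beyond invoking Theorem~\ref{Th2} is the mediant sandwiching $m\leq m^{\ast}\leq M^{\ast}\leq M$, which I expect to be the main (though mild) obstacle. It requires $q_{i}>0$ together with the nonnegativity $p_{i}\geq 0$ assumed in the second half of the theorem, which is precisely why the outer bounds appear only under that extra hypothesis.
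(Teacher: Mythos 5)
Your proof is correct and takes essentially the same route as the paper: (\ref{2.4}) is read off from Theorem \ref{Th2} with $q_{i}=\frac{1}{n}$, and (\ref{2.5}) follows by comparing $m^{\ast},M^{\ast}$ with $m=n\min_{i}p_{i}$ and $M=n\max_{i}p_{i}$ and multiplying by the nonnegative equal-weights Jensen functional. The only difference is that you prove the sandwiching $m\leq m^{\ast}\leq M^{\ast}\leq M$ explicitly via the mediant (averaging) argument, whereas the paper justifies this step by citing Remark \ref{Rem1}.
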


\begin{proof}
As emphasized in the beginning of this section, we may assume, without loss
of generality, that $x_{i}\leq x_{i+1}$, $i=1,...,n-1.$ Using Theorem \ref%
{Th2} when $q_{i}=\frac{1}{n}$, $i=1,...,n$ we get that (\ref{2.4}) holds.

If $p_{i}\geq 0,$\ $i=1,...,n$\ then according to Remark \ref{Rem1} $M^{\ast
}<\underset{1\leq i\leq n}{\max }\left( \frac{p_{i}}{\left( \frac{1}{n}%
\right) }\right) $, \ unless $M^{\ast }=\underset{1\leq i\leq n}{\max }%
\left( \frac{p_{i}}{\left( \frac{1}{n}\right) }\right) =\frac{p_{1}}{\left( 
\frac{1}{n}\right) }$, or $M^{\ast }=\underset{1\leq i\leq n}{\max }\left( 
\frac{p_{i}}{\left( \frac{1}{n}\right) }\right) =\frac{p_{n}}{\left( \frac{1%
}{n}\right) }$ and $m^{\ast }>\underset{1\leq i\leq n}{\min }\left( \frac{%
p_{i}}{\left( \frac{1}{n}\right) }\right) $ unless $m^{\ast }=\underset{%
1\leq i\leq n}{\min }\left( \frac{p_{i}}{\left( \frac{1}{n}\right) }\right) =%
\frac{p_{1}}{\left( \frac{1}{n}\right) }$, or $m^{\ast }=\underset{1\leq
i\leq n}{\min }\left( \frac{p_{i}}{\left( \frac{1}{n}\right) }\right) =\frac{%
p_{n}}{\left( \frac{1}{n}\right) }$ . Hence (\ref{2.5}) holds.
\end{proof}

Theorem \ref{Th6} refines \cite[Theorem 2.4]{SA} where it is proved that
when $p_{i}\geq 0$, $i=1,...,n$, 
\begin{eqnarray*}
&&\underset{1\leq i\leq n}{\min }\left\{ p_{i}\right\} \left[
\sum_{i=1}^{n}f\left( x_{i}\right) -nf\left( \frac{\sum_{i=1}^{n}x_{i}}{n}%
\right) \right] \\
&\leq &J_{n}\left( f,\mathbf{x,p}\right) \leq \underset{1\leq i\leq n}{\max }%
\left\{ p_{i}\right\} \left[ \sum_{i=1}^{n}f\left( x_{i}\right) -nf\left( 
\frac{\sum_{i=1}^{n}x_{i}}{n}\right) \right]
\end{eqnarray*}%
holds.

\section{\textbf{\ Improved Jensen functional through }$\Phi $\textbf{%
-uniform convexity}}

From here on we deal with results related to $\Phi $-uniformly convex
functions which refine Theorem \ref{Th1} and Theorem \ref{Th5}.

We start this section discussing two inequalities, the first is proved by
using the basic properties of uniformly convex functions as proved in \cite%
{Z}, \cite[Theorem 1]{N} and \cite{A1}. The other inequality about uniformly
convexity is derived from results in \cite{SY}.\ 

\begin{remark}
\label{Rem3} In \cite[Theorem 2.1]{Z}, \cite[Theorem 1, Inequality (23)]{N}
and \cite[Inequality 8]{A1}, it is proved that the set of $\Phi $-uniformly
convex functions $f$\ which are continuously differentiable satisfy the
inequality 
\begin{equation}
f\left( y\right) -f\left( x\right) \geq f^{^{\prime }}\left( x\right) \left(
y-x\right) +\Phi \left( \left\vert y-x\right\vert \right)  \label{3.1}
\end{equation}%
and therefore, from (\ref{3.1}) also the inequality%
\begin{equation}
\sum_{i=1}^{n}p_{i}f\left( x_{i}\right) -f\left(
\sum_{i=1}^{n}p_{i}x_{i}\right) \geq \sum_{i=1}^{n}p_{i}\Phi \left(
\left\vert x_{i}-\sum_{j=1}^{n}p_{j}x_{j}\right\vert \right) .  \label{3.2}
\end{equation}
\end{remark}

From (\ref{3.2}) in Remark \ref{Rem3} we get refinements of Theorem \ref{Th1}
and Theorem \ref{Th5}.

\begin{theorem}
\label{Th7} Under the same conditions and definitions on\ $\ \mathbf{p},$ $%
\mathbf{q},$ $\mathbf{x},$ $m$ and $M$ as in Theorem \ref{Th1}, if $f:\left[
0,b\right) \rightarrow 
\mathbb{R}
,$ $0<b\leq \infty ,$ is contineously differential and is a $\Phi $%
-Uniformly Convex function, $\sum_{j=1}^{n}p_{j}x_{j}=\overline{x}_{p}$\ and 
$\sum_{j=1}^{n}q_{j}x_{j}=\overline{x}_{q}$, $\mathbf{x}\in \left[
0,b\right) ^{n},$\ then the following inequlities hold: 
\begin{equation}
J_{n}\left( f,\mathbf{x},\mathbf{p}\right) -mJ_{n}\left( f,\mathbf{x},%
\mathbf{q}\right) \geq m\Phi \left( \left\vert \overline{x}_{q}-\overline{x}%
_{p}\right\vert \right) +\sum_{i=1}^{n}\left( p_{i}-mq_{i}\right) \Phi
\left( \left\vert x_{i}-\overline{x}_{p}\right\vert \right) ,  \label{3.3}
\end{equation}%
and%
\begin{equation}
MJ_{n}\left( f,\mathbf{x},\mathbf{q}\right) -J_{n}\left( f,\mathbf{x},%
\mathbf{p}\right) \geq \sum_{i=1}^{n}\left( Mq_{i}-p_{i}\right) \Phi \left(
\left\vert x_{i}-\overline{x}_{q}\right\vert \right) +\Phi \left( \left\vert 
\overline{x}_{q}-\overline{x}_{p}\right\vert \right)  \label{3.4}
\end{equation}%
In the special case that $n=2$ and $m=\frac{p_{1}}{q_{1}}$ we get the
inequality 
\begin{eqnarray}
&&J_{2}\left( f,\mathbf{x},\mathbf{p}\right) -\frac{p_{1}}{q_{1}}J_{2}\left(
f,\mathbf{x},\mathbf{q}\right)  \label{3.5} \\
&\geq &\frac{p_{1}}{q_{1}}\Phi \left( \left\vert \overline{x}_{q}-\overline{x%
}_{p}\right\vert \right) +\left( p_{2}-\frac{p_{1}}{q_{1}}q_{2}\right) \Phi
\left( p_{1}\left\vert x_{2}-x_{1}\right\vert \right)  \notag \\
&=&m\Phi \left( \left\vert \overline{x}_{q}-\overline{x}_{p}\right\vert
\right) +\left( 1-m\right) \Phi \left( p_{1}\left\vert
x_{2}-x_{1}\right\vert \right) ,  \notag
\end{eqnarray}%
and 
\begin{eqnarray}
&&\frac{p_{2}}{q_{2}}J_{2}\left( f,\mathbf{x},\mathbf{q}\right) -J_{2}\left(
f,\mathbf{x},\mathbf{p}\right)  \label{3.6} \\
&\geq &\left( \frac{p_{2}}{q_{2}}q_{1}-p_{1}\right) \Phi \left( \left\vert
q_{2}\left( x_{2}-x_{1}\right) \right\vert \right) +\Phi \left( \left\vert 
\overline{x}_{q}-\overline{x}_{p}\right\vert \right) .  \notag
\end{eqnarray}%
When $q_{1}=q_{2}=\frac{1}{2}$ we get from (\ref{3.5}) and (\ref{3.6}) the
inequalities that refine Theorem \ref{Th5}:%
\begin{eqnarray}
&&J_{2}\left( f,\mathbf{x},\mathbf{p}\right) -2p_{1}\left( \frac{f\left(
x_{1}\right) +f\left( x_{2}\right) }{2}-f\left( \frac{x_{1}+x_{2}}{2}\right)
\right)  \label{3.7} \\
&\geq &2p_{1}\Phi \left( \left\vert \frac{x_{1}+x_{2}}{2}-\overline{x}%
_{p}\right\vert \right) +\left( 1-2p_{1}\right) \Phi \left( p_{1}\left\vert
x_{2}-x_{1}\right\vert \right)  \notag
\end{eqnarray}%
and 
\begin{eqnarray}
&&\frac{p_{2}}{q_{2}}\left( \frac{f\left( x_{1}\right) +f\left( x_{2}\right) 
}{2}-f\left( \frac{x_{1}+x_{2}}{2}\right) \right) -J_{2}\left( f,\mathbf{x},%
\mathbf{p}\right)  \label{3.8} \\
&\geq &\left( p_{2}-p_{1}\right) \Phi \left( \left\vert \left( \frac{%
x_{2}-x_{1}}{2}\right) \right\vert \right) +\Phi \left( \left\vert \frac{%
x_{1}+x_{2}}{2}-\overline{x}_{p}\right\vert \right) .  \notag
\end{eqnarray}

In the special case that $f\left( x\right) =\Phi \left( x\right) =x^{2},$ $%
x\geq 0$ we get equalities in (\ref{3.3}), (\ref{3.4}), (\ref{3.5}), (\ref%
{3.6}), (\ref{3.7}) and (\ref{3.8}).
\end{theorem}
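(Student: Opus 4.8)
The plan is to derive both master inequalities (\ref{3.3}) and (\ref{3.4}) from inequality (\ref{3.2}) of Remark \ref{Rem3}, applied not to $\mathbf{x}$ with $\mathbf{p}$ directly but to a suitably \emph{augmented} configuration of points, and then to obtain (\ref{3.5})--(\ref{3.8}) as specializations. First I would rewrite the left-hand side of (\ref{3.3}) by expanding the Jensen functionals:
\[
J_n(f,\mathbf{x},\mathbf{p}) - m\,J_n(f,\mathbf{x},\mathbf{q}) = \sum_{i=1}^n (p_i - m q_i) f(x_i) + m f(\overline{x}_q) - f(\overline{x}_p).
\]
Because $m=\min_i (p_i/q_i)$, every coefficient $p_i-mq_i$ is non-negative, and together with the extra mass $m$ they sum to $(1-m)+m=1$; hence $\{p_i-mq_i\}_{i=1}^n$ and $m$ form a probability vector on the $n+1$ points $x_1,\dots,x_n,\overline{x}_q$, all lying in $[0,b)$. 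The decisive computation is that the barycenter of this configuration is exactly $\overline{x}_p$:
\[
\sum_{i=1}^n (p_i - m q_i) x_i + m\,\overline{x}_q = \sum_{i=1}^n p_i x_i - m\sum_{i=1}^n q_i x_i + m\,\overline{x}_q = \overline{x}_p .
\]
Applying (\ref{3.2}) to these $n+1$ points with these weights then produces (\ref{3.3}) at once, its right-hand side being $\sum_i (p_i-mq_i)\Phi(|x_i-\overline{x}_p|)+m\Phi(|\overline{x}_q-\overline{x}_p|)$.

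For (\ref{3.4}) I would proceed symmetrically. Writing
\[
M\,J_n(f,\mathbf{x},\mathbf{q}) - J_n(f,\mathbf{x},\mathbf{p}) = \sum_{i=1}^n (M q_i - p_i) f(x_i) + f(\overline{x}_p) - M f(\overline{x}_q),
\]
the coefficients $Mq_i-p_i$ are non-negative since $M=\max_i(p_i/q_i)$, and with the extra unit mass at $\overline{x}_p$ they have total mass $(M-1)+1=M$ and barycenter $\overline{x}_q$. After dividing all weights by $M>0$ to normalize, applying (\ref{3.2}), and multiplying back by $M$, one obtains (\ref{3.4}). Equivalently one uses the scale-invariant form of (\ref{3.2}): for non-negative weights $w_i$ of total mass $W$ with barycenter $\bar{x}=W^{-1}\sum_i w_i x_i$, the gradient inequality (\ref{3.1}) at $\bar{x}$ gives $\sum_i w_i f(x_i)-W f(\bar{x})\geq \sum_i w_i \Phi(|x_i-\bar{x}|)$, the linear term $f'(\bar{x})\sum_i w_i(x_i-\bar{x})$ vanishing by the definition of $\bar{x}$.

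The four special cases need only elementary algebra. For (\ref{3.5}) and (\ref{3.6}) I set $n=2$ and choose the extremal ratio ($m=p_1/q_1$, respectively $M=p_2/q_2$); then the summand at the extremal index vanishes ($p_1-mq_1=0$, respectively $Mq_2-p_2=0$), and the surviving $\Phi$-argument simplifies through $x_2-\overline{x}_p=(1-p_2)x_2-p_1x_1=p_1(x_2-x_1)$ and $x_1-\overline{x}_q=-q_2(x_2-x_1)$, using $p_1+p_2=q_1+q_2=1$. The remaining coefficient in (\ref{3.5}) equals $1-m$ because $p_2-\tfrac{p_1}{q_1}q_2=(1-p_1)-\tfrac{p_1}{q_1}(1-q_1)=1-\tfrac{p_1}{q_1}$. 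Setting $q_1=q_2=\tfrac12$, so that $\overline{x}_q=\tfrac{x_1+x_2}{2}$ and $J_2(f,\mathbf{x},\mathbf{q})=\tfrac{f(x_1)+f(x_2)}{2}-f(\tfrac{x_1+x_2}{2})$, turns (\ref{3.5}) into (\ref{3.7}) and (\ref{3.6}) into (\ref{3.8}). For the equality claim I would observe that $f=\Phi=x^2$ gives equality already in (\ref{3.1}), since $y^2-x^2=2x(y-x)+(y-x)^2$; consequently (\ref{3.2}) holds with equality for every configuration, and as (\ref{3.3})--(\ref{3.8}) were obtained by merely applying (\ref{3.2}), each of them becomes an equality.

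The main obstacle is the single creative step at the start: recognizing that the residual weights $p_i-mq_i$ (respectively $Mq_i-p_i$), which are non-negative \emph{precisely} because of the defining extremal property of $m$ (respectively $M$), can be completed by one extra mass placed at $\overline{x}_q$ (respectively $\overline{x}_p$) so as to form a configuration whose barycenter is $\overline{x}_p$ (respectively $\overline{x}_q$). Once this augmented configuration and its barycenter are identified, the whole theorem collapses to one invocation of (\ref{3.2}) followed by routine simplifications.
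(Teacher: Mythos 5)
Your proposal is correct and coincides with the paper's own proof: the paper proves (\ref{3.3}) by applying (\ref{3.2}) to exactly your augmented configuration (points $x_1,\dots,x_n,\overline{x}_q$ with weights $p_i-mq_i$ and $m$, barycenter $\overline{x}_p$), and proves (\ref{3.4}) with the normalized weights $r_i=q_i-p_i/M$ and $1/M$ at the point $\overline{x}_p$, which is precisely your ``divide by $M$ and multiply back'' argument. Your explicit verification of nonnegativity, total mass, and barycenter, the algebra for the $n=2$ specializations, and the equality observation $y^2-x^2=2x(y-x)+(y-x)^2$ for $f=\Phi=x^2$ only spell out details the paper leaves implicit.
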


\begin{proof}
To prove (\ref{3.3}) we define $\mathbf{y}$ as 
\begin{equation*}
y_{i}=\left\{ 
\begin{array}{ll}
x_{i}, & i=1,...,n \\ 
\sum_{j=1}^{n}q_{j}x_{j}, & i=n+1%
\end{array}%
\right. ,
\end{equation*}%
and $\mathbf{d}$ \ as 
\begin{equation*}
d_{i}=\left\{ 
\begin{array}{ll}
p_{i}-mq_{i}, & i=1,...,n \\ 
m, & i=n+1%
\end{array}%
\right. .
\end{equation*}%
\newline

Then (\ref{3.2}) for $\mathbf{y}$ and $\mathbf{d}$ is 
\begin{eqnarray*}
&&\sum_{i=1}^{n}\left( p_{i}-mq_{i}\right) f\left( x_{i}\right) +mf\left(
\sum_{i=1}^{n}q_{i}x_{i}\right) -f\left( \sum_{i=1}^{n}p_{i}x_{i}\right) \\
&=&\sum_{i=1}^{n+1}d_{i}f\left( y_{i}\right) -f\left(
\sum_{i=1}^{n+1}d_{i}y_{i}\right) \geq \sum_{i=1}^{n+1}d_{i}\Phi \left(
\left\vert y_{i}-\sum_{j=1}^{n+1}d_{j}y_{j}\right\vert \right)
\end{eqnarray*}%
\begin{equation*}
=\sum_{i=1}^{n}\left( p_{i}-mq_{i}\right) \Phi \left( \left\vert
x_{i}-\sum_{j=1}^{n}p_{j}x_{j}\right\vert \right) +m\Phi \left( \left\vert
\sum_{i=1}^{n}\left( p_{i}-q_{i}\right) x_{i}\right\vert \right)
\end{equation*}%
which is (\ref{3.3}) and therefore also (\ref{3.5}) and (\ref{3.7}) are
satisfied.

To get (\ref{3.4}), we choose $\mathbf{z}$ and $\mathbf{r}$ as 
\begin{equation*}
z_{i}=\left\{ 
\begin{array}{ll}
x_{i}, & i=1,...,n \\ 
\sum_{j=1}^{n}p_{j}x_{j}, & i=n+1%
\end{array}%
\right. ,
\end{equation*}%
and 
\begin{equation*}
r_{i}=\left\{ 
\begin{array}{ll}
q_{i}-\frac{p_{i}}{M}, & i=1,...,n \\ 
\frac{1}{M}, & i=n+1%
\end{array}%
\right. .
\end{equation*}

Then, as $f$ is $\Phi $-uniformly convex, $\sum_{i=1}^{n+1}r_{i}=1,$ \ $%
r_{i}\geq 0,$ and $\sum_{i=1}^{n}q_{i}x_{i}=\sum_{i=1}^{n+1}r_{i}z_{i}$,\ we
get that 
\begin{eqnarray*}
&&\sum_{i=1}^{n}\left( q_{i}-\frac{p_{i}}{M}\right) f\left( x_{i}\right) +%
\frac{1}{M}f\left( \sum_{i=1}^{n}p_{i}x_{i}\right) -f\left(
\sum_{i=1}^{n}q_{i}x_{i}\right) \\
&=&\sum_{i=1}^{n+1}r_{i}f\left( z_{i}\right) -f\left(
\sum_{i=1}^{n+1}r_{i}z_{i}\right) \\
&\geq &\sum_{i=1}^{n+1}r_{i}\Phi \left( \left\vert
z_{i}-\sum_{i=1}^{n+1}r_{i}z_{i}\right\vert \right) \\
&=&\sum_{i=1}^{n}\left( q_{i}-\frac{p_{i}}{M}\right) \Phi \left( \left\vert
x_{i}-\sum_{j=1}^{n}q_{j}x_{j}\right\vert \right) +\frac{1}{M}\Phi \left(
\left\vert \sum_{i=1}^{n}\left( p_{i}-q_{i}\right) x_{i}\right\vert \right)
\end{eqnarray*}%
which is equivalent to (\ref{3.4}), and therefore also (\ref{3.6}) and (\ref%
{3.8}) are satisfied. \ The proof of the theorem is complete.
\end{proof}

The proof of Theorem \ref{Th7} is similar to the proof of \cite[Theorem 3]%
{AD}, there it is about superquadratic functions.

In Theorem \ref{Th7} we used (\ref{3.2}) to prove a refinement of Theorem %
\ref{Th1} through $\Phi $-uniformly convex functions.

From Theorem \ref{Th3} we get another refinement of Theorem \ref{Th1} and
Theorem \ref{Th5} proved in the following theorem where $\Phi $ satisfies
the conditions of Remark \ref{Rem2}:

\begin{theorem}
\label{Th8} Under the same conditions and definitions on\ $\ \mathbf{p},$ $%
\mathbf{q},$ $\mathbf{x},$ $m$ as in Theorem \ref{Th1}, if $f:\left[
a,b\right) \rightarrow 
\mathbb{R}
,$ is a $\Phi $-Uniformly Convex function, where $\Phi :\left[ 0,b-a\right)
\rightarrow 
\mathbb{R}
_{+},$ $\sum_{j=1}^{n}p_{j}x_{j}=\overline{x}_{p}$\ and $%
\sum_{j=1}^{n}q_{j}x_{j}=\overline{x}_{q}$, $\mathbf{x}\in \left[ 0,b\right)
^{n}$ and\ also $x_{1}\leq x_{2}\leq ,...,\leq x_{k-1}\leq
\sum_{i=1}^{n}q_{i}x_{i}\leq x_{k}\leq x_{k+1}\leq ,...,\leq x_{n}$,\ then
the following inequlity holds:%
\begin{eqnarray}
&&  \label{3.9} \\
&&J_{n}\left( f,\mathbf{x},\mathbf{p}\right) -mJ_{n}\left( f,\mathbf{x},%
\mathbf{q}\right)  \notag \\
&\geq &\sum_{i=1,i\neq k-1}^{n}\left( p_{i}-mq_{i}\right) \left(
p_{i+1}-mq_{i+1}\right) \Phi \left( x_{i+1}-x_{i}\right)  \notag \\
&&+m\left( p_{k-1}-mq_{k-1}\right) \Phi \left(
\sum_{i=1}^{n}q_{i}x_{i}-x_{k-1}\right) +m\left( p_{k}-mq_{k}\right) \Phi
\left( x_{k}-\sum_{i=1}^{n}q_{i}x_{i}\right) .  \notag
\end{eqnarray}
\end{theorem}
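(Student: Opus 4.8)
The plan is to reuse the auxiliary construction from the proof of Theorem~\ref{Th7}, but to apply Theorem~\ref{Th3} (inequality~(\ref{1.3})) to it rather than (\ref{3.2}). I would set $y_{i}=x_{i}$ for $i=1,\dots,n$ and $y_{n+1}=\sum_{j=1}^{n}q_{j}x_{j}=\overline{x}_{q}$, and take weights $d_{i}=p_{i}-mq_{i}$ for $i=1,\dots,n$ and $d_{n+1}=m$. The first thing to verify is that $\mathbf{d}$ is an admissible weight vector for Theorem~\ref{Th3}: since $m=\min_{i}(p_{i}/q_{i})$ one has $p_{i}\geq mq_{i}$, so each $d_{i}\geq 0$, and $\sum_{i=1}^{n+1}d_{i}=(\sum p_{i}-m\sum q_{i})+m=1-m+m=1$.

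Next I would evaluate both ends of (\ref{1.3}) for the pair $(\mathbf{y},\mathbf{d})$. The barycenter is $\sum_{i=1}^{n+1}d_{i}y_{i}=\sum_{i=1}^{n}(p_{i}-mq_{i})x_{i}+m\overline{x}_{q}=\overline{x}_{p}-m\overline{x}_{q}+m\overline{x}_{q}=\overline{x}_{p}$, and the weighted sum of values is $\sum_{i=1}^{n+1}d_{i}f(y_{i})=\sum_{i=1}^{n}p_{i}f(x_{i})-m\sum_{i=1}^{n}q_{i}f(x_{i})+mf(\overline{x}_{q})$. Hence the left-hand side of (\ref{1.3}) collapses exactly to $J_{n}(f,\mathbf{x},\mathbf{p})-mJ_{n}(f,\mathbf{x},\mathbf{q})$, which is the quantity on the left of (\ref{3.9}).

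The decisive step is reading off the right-hand side of (\ref{1.3}), which runs over consecutive pairs of the increasing rearrangement of $\mathbf{y}$; this is precisely where the ordering hypothesis is used. It places the single inserted point $y_{n+1}=\overline{x}_{q}$ into the $k$-th slot, so the sorted list is $x_{1},\dots,x_{k-1},\overline{x}_{q},x_{k},\dots,x_{n}$. Every consecutive pair of $x$'s not straddling $\overline{x}_{q}$ contributes a generic term $(p_{i}-mq_{i})(p_{i+1}-mq_{i+1})\Phi(x_{i+1}-x_{i})$ with $i\neq k-1$, while the two pairs touching $\overline{x}_{q}$, namely $(x_{k-1},\overline{x}_{q})$ and $(\overline{x}_{q},x_{k})$ with weight pairs $(p_{k-1}-mq_{k-1},m)$ and $(m,p_{k}-mq_{k})$, yield the two boundary terms $m(p_{k-1}-mq_{k-1})\Phi(\overline{x}_{q}-x_{k-1})$ and $m(p_{k}-mq_{k})\Phi(x_{k}-\overline{x}_{q})$. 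Collecting these produces (\ref{3.9}).

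The main obstacle is purely combinatorial bookkeeping: correctly tracking the sorting permutation of $\mathbf{y}$ and pairing each weight $d_{j}$ with its point, most delicately at the insertion site $\overline{x}_{q}$. One should also check that $\mathbf{y}\in[a,b)^{n+1}$, so that each argument of $\Phi$ lies in its domain $[0,b-a)$; this is immediate since $\overline{x}_{q}$ is a convex combination of the $x_{i}$ and all the displayed differences are nonnegative by hypothesis. Finally, I note that the upper index of the first sum in (\ref{3.9}) is effectively $n-1$, since $(x_{n-1},x_{n})$ is the last available consecutive pair; together with the two insertion pairs this accounts for exactly the $n$ terms that Theorem~\ref{Th3} supplies for $n+1$ points.
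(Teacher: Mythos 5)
Your proof is correct and takes essentially the same approach as the paper: both apply Theorem~\ref{Th3} to the same augmented system of $n+1$ points and weights, $\left\{ \left( x_{i},p_{i}-mq_{i}\right) \right\} _{i=1}^{n}\cup \left\{ \left( \overline{x}_{q},m\right) \right\}$; the paper simply writes the tuple already sorted (inserting $\overline{x}_{q}$ at position $k$, as in (\ref{3.10})--(\ref{3.11})), while you append $\overline{x}_{q}$ at the end and let the sorting permutation of Theorem~\ref{Th3} place it, which is the same argument. Your remark that the upper index of the first sum in (\ref{3.9}) is effectively $n-1$ correctly identifies a slip in the stated formula, and your explicit checks that $d_{i}\geq 0$ and $\sum_{i}d_{i}=1$ supply details the paper leaves implicit.
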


\begin{proof}
It is given here that the sequence $\mathbf{x}\ $is increasing. Therefore we
can apply the inequality (\ref{1.3}) in Theorem \ref{Th3} for the also
increasing $(n+1)$-tuple $\mathbf{y=}\left( y_{1},...,y_{n+1}\right) $ \ 
\begin{equation}
y_{i}=\left\{ 
\begin{array}{ll}
x_{i,} & i=1,...,k-1 \\ 
\sum_{j=1}^{n}q_{j}x_{j},\quad & i=k \\ 
x_{i-1}, & i=k+1,...,n+1%
\end{array}%
\right.  \label{3.10}
\end{equation}%
and to 
\begin{equation}
d_{i}=\left\{ 
\begin{array}{ll}
p_{i}-mq_{i}, & i=1,...,k-1 \\ 
m, & i=k \\ 
p_{i-1}-mq_{i-1},\quad & i=k+1,...,n+1%
\end{array}%
\right. ,  \label{3.11}
\end{equation}%
where $m=\underset{1\leq i\leq n}{\min }\left( \frac{p_{i}}{q_{i}}\right) $,
and we get from Theorem \ref{Th3} that 
\begin{eqnarray*}
&&J_{n}\left( f,\mathbf{x},\mathbf{p}\right) -mJ_{n}\left( f,\mathbf{x},%
\mathbf{q}\right) \\
&=&\sum_{i=1}^{n+1}d_{i}f\left( y_{i}\right) -f\left(
\sum_{i=1}^{n+1}d_{i}y_{i}\right) \geq \sum_{i=1}^{n+1}d_{i}d_{i+1}\Phi
\left( \left\vert y_{i+1}-y_{i}\right\vert \right) .
\end{eqnarray*}%
This inequality means by (\ref{3.10}) and (\ref{3.11}) that%
\begin{eqnarray*}
&&\sum_{i=1}^{k-1}\left( p_{i}-mq_{i}\right) f\left( x_{i}\right) +mf\left(
\sum_{i=1}^{n}p_{i}x_{i}\right) +\sum_{i=k}^{n}\left( p_{i}-mq_{i}\right)
f\left( x_{i}\right) \\
&\geq &\sum_{i=1,i\neq k-1}^{n}\left( p_{i}-mq_{i}\right) \left(
p_{i+1}-mq_{i+1}\right) \Phi \left( x_{i+1}-x_{i}\right) \\
&&+m\left( p_{k-1}-mq_{k-1}\right) \Phi \left(
\sum_{i=1}^{n}q_{i}x_{i}-x_{k-1}\right) +m\left( p_{k}-mq_{k}\right) \Phi
\left( x_{k}-\sum_{i=1}^{n}q_{i}x_{i}\right) .
\end{eqnarray*}%
Hence (\ref{3.9}) holds and the proof of the theorem is complete.
\end{proof}

In Theorem \ref{Th9} we get a refinement of the left handside of the
inequality (\ref{2.3}) in Theorem \ref{Th5} for $\Phi $-uniformly convex
functions:

\begin{theorem}
\label{Th9} Let $f$ be $\Phi $-uniformly convex. Then the inequality 
\begin{eqnarray}
&&p_{1}f\left( a\right) +p_{2}f\left( b\right) -f\left( p_{1}a+p_{2}b\right)
-m\left( q_{1}f\left( a\right) +q_{2}f\left( b\right) -f\left(
q_{1}a+q_{2}b\right) \right)  \label{3.12} \\
&\geq &m\left( p_{2}-mq_{2}\right) \Phi \left( q_{1}\left( b-a\right)
\right) =m\left( 1-m\right) \Phi \left( q_{1}\left( b-a\right) \right) 
\notag
\end{eqnarray}%
holds when $\frac{p_{1}}{q_{1}}=m\leq \frac{p_{2}}{q_{2}}$, $p_{i}\geq 0$,$\
q_{i}>0$, $i=1,2$, $p_{1}+p_{2}=q_{1}+q_{2}=1$.

In the special case for $q_{1}=q_{2}=\frac{1}{2}$ we get the inequality%
\begin{eqnarray}
&&p_{1}f\left( a\right) +p_{2}f\left( b\right) -f\left( p_{1}a+p_{2}b\right)
-2p_{1}\left[ \frac{f\left( a\right) +f\left( b\right) }{2}-f\left( \frac{a+b%
}{2}\right) \right]  \label{3.13} \\
&\geq &2p_{1}\left( 1-2p_{1}\right) \Phi \left( \frac{b-a}{2}\right) , 
\notag
\end{eqnarray}%
and the maximum of the right handside of (\ref{3.13}) is obtained when $%
p_{1}=\frac{1}{4}$ and (\ref{3.13}) is 
\begin{eqnarray}
&&\frac{1}{4}f\left( a\right) +\frac{3}{4}f\left( b\right) -f\left( \frac{1}{%
4}a+\frac{3}{4}b\right)  \label{3.14} \\
&&-\frac{1}{2}\left( \frac{f\left( a\right) +f\left( b\right) }{2}-f\left( 
\frac{a+b}{2}\right) \right)  \notag \\
&\geq &\frac{1}{4}\Phi \left( \frac{x_{2}-x_{1}}{2}\right) .  \notag
\end{eqnarray}
\end{theorem}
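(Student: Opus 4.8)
The plan is to recognize (\ref{3.12}) as the two-point specialization of Theorem \ref{Th8}, and then to read off (\ref{3.13}) and (\ref{3.14}) by direct substitution and a one-variable optimization.

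First I would set $n=2$, $x_1=a$, $x_2=b$, and take $m=p_1/q_1=\min\{p_1/q_1,\,p_2/q_2\}$, which is exactly the hypothesis $\frac{p_1}{q_1}=m\le\frac{p_2}{q_2}$. Since $q_1,q_2>0$ and $a\le b$, we have $a\le q_1a+q_2b\le b$, so the index $k$ appearing in Theorem \ref{Th8} is forced to be $k=2$ (hence $k-1=1$). The auxiliary tuple of that theorem then reads $\mathbf{y}=(a,\,q_1a+q_2b,\,b)$, which is increasing, with weights $\mathbf{d}=(p_1-mq_1,\,m,\,p_2-mq_2)$; and the left-hand side of (\ref{3.9}) is literally $J_2(f,\mathbf{x},\mathbf{p})-mJ_2(f,\mathbf{x},\mathbf{q})$, i.e.\ the left-hand side of (\ref{3.12}). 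Equivalently, one could apply the basic inequality (\ref{1.3}) of Theorem \ref{Th3} directly to this $3$-tuple $\mathbf{y}$ with weights $\mathbf{d}$.

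The key simplification is that $d_1=p_1-mq_1=0$, precisely because $m=p_1/q_1$. Consequently, in the bound $\sum_i d_id_{i+1}\Phi(|y_{i+1}-y_i|)$ supplied by Theorem \ref{Th3}, every product involving $d_1$ vanishes and only the $i=2$ term survives, namely $d_2d_3\,\Phi(|y_3-y_2|)=m(p_2-mq_2)\,\Phi\bigl(b-(q_1a+q_2b)\bigr)$. Using $b-(q_1a+q_2b)=q_1(b-a)$ together with $p_2-mq_2=1-mq_1-mq_2=1-m$ (since $p_2=1-p_1=1-mq_1$ and $q_1+q_2=1$) yields exactly the right-hand side $m(p_2-mq_2)\Phi(q_1(b-a))=m(1-m)\Phi(q_1(b-a))$ of (\ref{3.12}); note also $p_2-mq_2=q_2(p_2/q_2-m)\ge0$, which keeps the bound consistent with $\Phi\ge0$.

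For (\ref{3.13}) I would substitute $q_1=q_2=\tfrac12$, so that $m=2p_1$, $q_1a+q_2b=\tfrac{a+b}{2}$, $q_1(b-a)=\tfrac{b-a}{2}$, and $mJ_2(f,\mathbf{x},\mathbf{q})=2p_1\bigl[\tfrac{f(a)+f(b)}{2}-f(\tfrac{a+b}{2})\bigr]$; this converts (\ref{3.12}) into (\ref{3.13}) with right-hand side $2p_1(1-2p_1)\Phi(\tfrac{b-a}{2})$. Finally (\ref{3.14}) follows by maximizing the scalar $2p_1(1-2p_1)$ over $p_1$: the stationary point $p_1=\tfrac14$ gives the maximal coefficient $\tfrac14$, and plugging $p_1=\tfrac14$ into (\ref{3.13}) produces (\ref{3.14}). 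I do not expect any genuine obstacle here; the only point requiring care is the index bookkeeping that pins down $k=2$ and the observation $d_1=0$, which is exactly what collapses the multi-term estimate of Theorem \ref{Th8} to the single clean term displayed in (\ref{3.12}).
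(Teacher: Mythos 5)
Your proof is correct, but it reaches (\ref{3.12}) through a different key lemma than the paper does. You specialize Theorem \ref{Th8} (equivalently, apply inequality (\ref{1.3}) of Theorem \ref{Th3} to the three-point tuple $\mathbf{y}=(a,\,q_{1}a+q_{2}b,\,b)$ with weights $(0,\,m,\,p_{2}-mq_{2})$), and the observation $d_{1}=p_{1}-mq_{1}=0$ collapses the resulting bound to the single term $m(p_{2}-mq_{2})\Phi (q_{1}(b-a))$. The paper instead works with only the two points $y_{1}=q_{1}a+q_{2}b$, $y_{2}=b$ and weights $d_{1}=m$, $d_{2}=p_{2}-mq_{2}$ (which sum to $1$ since $mq_{1}=p_{1}$), and applies Definition \ref{Def1} directly with $t=m$; the identification of the left-hand side of (\ref{3.12}) with $\sum_{i}d_{i}f(y_{i})-f\left( \sum_{i}d_{i}y_{i}\right) $ and the algebra $p_{2}-mq_{2}=1-m$, $|y_{2}-y_{1}|=q_{1}(b-a)$ are common to both arguments. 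What each route buys: the paper's is more elementary and self-contained, needing nothing beyond the definition of $\Phi $-uniform convexity---in particular no monotonicity of $\Phi $ and no sorted-points hypothesis---while yours exhibits Theorem \ref{Th9} cleanly as the $n=2$ case of Theorem \ref{Th8}, at the cost of inheriting Theorem \ref{Th3}'s requirements on the modulus (per Remark \ref{Rem2}, $\Phi $ increasing with $\Phi (0)=0$) and of silently repairing the index range in (\ref{3.9}), whose displayed sum formally runs to $n$ (referencing $p_{n+1}$, $q_{n+1}$, $x_{n+1}$) and must be read as running to $n-1$ for your $n=2$ specialization to parse. Your treatment of (\ref{3.13}) (substituting $m=2p_{1}$) and the optimization $\max_{p_{1}}2p_{1}(1-2p_{1})=\tfrac{1}{4}$ attained at $p_{1}=\tfrac{1}{4}$, yielding (\ref{3.14}), matches the paper's.
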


\begin{proof}
The left handside of (\ref{3.12}) can be rewritten when $\frac{p_{1}}{q_{1}}%
=m$,$\ x_{1}=a$, $x_{2}=b$\ as 
\begin{equation*}
\left( p_{2}-mq_{2}\right) f\left( x_{2}\right) +mf\left(
q_{1}x_{1}+q_{2}x_{2}\right) -f\left( p_{1}x_{1}+p_{2}x_{2}\right)
\end{equation*}

Denoting%
\begin{equation}
\begin{tabular}{ll}
$d_{1}=m,$ & $y_{1}=q_{1}x_{1}+q_{2}x_{2}$ \\ 
$d_{2}=p_{2}-mq_{2},$ & $y_{2}=x_{2},$%
\end{tabular}
\label{3.15}
\end{equation}%
we take into consideration that $\frac{p_{1}}{q_{1}}=m\leq \frac{p_{2}}{q_{2}%
}$, and because $\sum_{i=1}^{2}p_{i}x_{i}=\sum_{i=1}^{2}d_{i}y_{i}$ we get
from (\ref{3.15}) and Definition \ref{Def1} that the following\ inequality
holds: 
\begin{eqnarray}
&&p_{1}f\left( x_{1}\right) +p_{2}f\left( x_{2}\right) -f\left(
p_{1}x_{1}+p_{2}x_{2}\right)  \label{3.16} \\
&&-m\left( q_{1}f\left( x_{1}\right) +q_{2}f\left( x_{2}\right) -f\left(
q_{1}x_{1}+q_{2}x_{2}\right) \right)  \notag \\
&=&\sum_{i=1}^{2}d_{i}f\left( y_{i}\right) -f\left(
\sum_{i=1}^{2}d_{i}y_{i}\right)  \notag \\
&=&mf\left( q_{1}x_{1}+q_{2}x_{2}\right) +\left( p_{2}-mq_{2}\right) f\left(
x_{2}\right)  \notag \\
&\geq &m\left( p_{2}-mq_{2}\right) \Phi \left( \left( x_{2}-x_{1}\right)
q_{1}\right)  \notag \\
&=&\frac{p_{1}}{q_{1}}\left( 1-p_{1}-\frac{p_{1}}{q_{1}}q_{2}\right) \Phi
\left( q_{1}\left( x_{2}-x_{1}\right) \right)  \notag \\
&=&m\left( 1-m\right) \Phi \left( q_{1}\left( x_{2}-x_{1}\right) \right) . 
\notag
\end{eqnarray}%
Hence, (\ref{3.12}) is proved and therefore also (\ref{3.13}). The maximum
of the right handside of (\ref{3.13}) is obtained when $p_{1}=\frac{1}{4}$
therefore the right handside of (\ref{3.14}) gives the best refinement of
the left handside of (\ref{2.3}) in Theorem \ref{Th5} for $\Phi $-uniformly
convex functions.
\end{proof}

\begin{remark}
\label{Rem4} Inequality (\ref{3.7}) is a refinement of the left handside of (%
\ref{2.3}). However when $\frac{\Phi \left( x\right) }{x^{2}}$ is
increasing, it is a weaker refinement than the refinement (\ref{3.14}) in
Theorem \ref{Th9} obtained from Definition \ref{Def1}.

When we take $f\left( x\right) \ $to be $f\left( x\right) =\Phi \left(
x\right) =x^{2}$, $x\geq 0$ we get an equality all over Theorem \ref{Th7}
for all $n=2,3,...$. But in Theorem \ref{Th8} we get always equality only
for $f\left( x\right) =\Phi \left( x\right) =x^{2}$, for $n=2$.\ 
\end{remark}

\bigskip

\bigskip


\begin{thebibliography}{99}
\bibitem{A} S. Abramovich, "\textit{Refinement of Jensen's inequality by
uniformly convex}

\textit{\ \ \ \ \ functions}", Aequationes Mathematicae. 97 no. 1, (2023),
75-88.

\bibitem{A1} S. Abramovich, "On generalized Convexity and Superquadracity",a
chapter in: Optimization and Application, Mathematical Analysis in
Interdisciplinary Research, Vol. 179, Springer, (2021), appeared
electronically in March 2022.

\bibitem{AD} S. Abramovich and S. S. Dragomir, "\textit{Normalized Jensen
Functional, Superquadracity and Related Inequalities}", International Series
of Numerical Mathematics, Birkh\"{a}user Verlag, 157, (2008) p-p 217-228.

\bibitem{BMP} J. Bari\'{c}, M. Mati\'{c}, J. Pe\v{c}ari\'{c}, "On the bounds
for the normalized Jensen functional and Jensen-Steffensen Inequality",
Math. Inequal. Appl. 12, (2009), no 2, 413-432.

\bibitem{D} S. S. Dragomir, "Bounds of the normalised Jensen functional",
Bull. Austral. Math. Soc., 74 (2006), 471-478.

\bibitem{SA} S. Simi\'{c} and B. Almohsen, "\textit{Some generalization of
Jensen's inequality}", Contemporary Mathematics, 2020, pp. 14.

\bibitem{SY} Y. Sayyari, "\textit{\ New entropy bounds via uniformly convex
functions}", Chaos, Solitons and Fractals, 141 (2020), 110360.

\bibitem{N} M. Niezgoda, "An extension of Levin-Steckin's theorem to
uniformly convex and superquadratic functions", Aequat. Math. 94 (2020),
303-321

\bibitem{Z} C. Zalinescu, "\textit{On uniformly convex functions}", J. Math.
Anal. Appl., 95 (1983) 344-374.
\end{thebibliography}
\end{document}